
\documentclass[12pt,psamsfonts]{article}
\usepackage{amsmath}
\usepackage{amssymb}
\usepackage[mathcal]{eucal}
\usepackage[all]{xy}
\usepackage{color}


\newtheorem{deff}{Definition}[section]

\newtheorem{lemma}[deff]{Lemma}

\newtheorem{theorem}[deff]{Theorem}
\newtheorem{coro}[deff]{Corollary}


\newtheorem{propo}[deff]{Proposition}

\newtheorem{em-example}[deff]{Example}

\newtheorem{em-def}[deff]{Definition}

\newtheorem{em-remark}[deff]{Remark}

\newtheorem{em-question}[deff]{Question}

\newtheorem{problem}[deff]{Problem}

\newenvironment{defi}{\begin{em-def} \em  }{ \end{em-def}}
\newenvironment{remark}{\begin{em-remark} \em }{\end{em-remark}}

\newenvironment{proof}{\noindent {\it Proof}.}{\QED}

\newcommand{\palabras}{\noindent{\it Keywords: }{\small Reflexive;
Precompact; Pseudocompact; Baire property; Convergent sequence}}

\newcommand\proved{\vbox{\hrule\hbox{\vrule\vrule
width 0pt height 4pt depth3.5pt\hskip7pt\vrule}\hrule}}
\newcommand\QED{\hfill \proved \medskip}


\def\ker{\mathop{\rm ker}}

\def\sm{\setminus}
\def\sub{\subseteq}

\def\om{\omega}

\DeclareMathOperator*{\supp}{supp}

\DeclareMathSymbol{\res}{\mathord}{AMSa}{"16}

\def\:{\nobreak \hskip .1111em\mathpunct {}\nonscript \mkern
   -\thinmuskip {:}\hskip .3333emplus.0555em\relax}

\catcode`\@=12
\def\T{{\mathbb T}}

\def\Z{{\mathbb Z}}
\def\N{{\mathbb N}}

\title{On convergent sequences in dual groups}
\author{M.V. Ferrer 
\thanks{This author was partially supported by the Generalitat Valenciana,
grant GV/2018/110}
\and  
S.~Hern\'andez
\thanks{This author was partially supported by the Spanish Ministerio de Econom\'{i}a y Competitividad,
grant MTM2016-77143-P (AEI/FEDER, EU)}
 \and
M.~Tkachenko
\thanks{The article was finished  during the visit of the third listed author to the
Universitat Jaume I, Spain, in June, 2019. He expresses his gratitude to the
hosts for financial support and kind attention.}
}

\date{October 10, 2019}

\begin{document}
\maketitle

\begin{abstract}
We provide some characterizations of precompact abelian groups $G$ whose
dual group $G_p^\wedge$ endowed with the pointwise convergence topology
on elements of $G$ contains a nontrivial convergent sequence. In the special
case of precompact abelian \emph{torsion} groups $G$, we characterize the
existence of a nontrivial convergent sequence in $G_p^\wedge$ by the following
property of $G$: \emph{No infinite quotient group of $G$ is countable.} Finally,
we present an example of a dense subgroup $G$ of the compact metrizable
group $\mathbb{Z}(2)^\omega$ such that $G$ is of the first category in itself,
has measure zero, but the dual group $G_p^\wedge$ does not contain
infinite compact subsets. This complements Theorem 1.6 in [J.E.~Hart and
K.~Kunen, Limits in function spaces and compact groups, \textit{Topol. Appl.}
\textbf{151} (2005), 157--168]. As a consequence, we obtain an example of
a precompact reflexive abelian group which is of the first Baire category.
\end{abstract}

\palabras
\medskip

\noindent MSC: Primary 43A40, 22D35; Secondary 22C05, 54E52, 54C10

\section{Introduction}

Our aim is to study the class of precompact topological abelian groups $G$ such that
the dual group $G_p^\wedge$ endowed with the pointwise convergence topology on
elements of $G$ contains a nontrivial convergent sequence. It is well known that a group
$G$ from this class cannot be compact (see \cite{glick}). A more general result follows from
\cite[Proposition~4.4]{HM}: If $G_p^\wedge$ contains infinite compact subsets then $G$
is not pseudocompact. An even more general fact is established in \cite{FT}: If $G$ has
the Baire property, then $G_p^\wedge$ does not contain nontrivial convergent sequences.
We conjecture that $G_p^\wedge$ does not contain infinite compact subsets in this case
(see Problem~\ref{Prob:1}). This conjecture has been proved in \cite[Theorem~3.3]{CDT}
for bounded torsion groups $G$.

Therefore, we have to consider only precompact groups of the first Baire category,
i.e.~the groups which can be covered by countably many nowhere dense subsets. In
Theorem~\ref{Pro:Mea} we present an example of a dense subgroup $G$ of the compact
metrizable group $\mathbb{Z}(2)^\omega$ such that $G$ is of the first category in itself,
has measure zero, but the dual group $G_p^\wedge$ does not contain nontrivial convergent
sequences. Furthermore, since $G_p^\wedge$ is countable, all compact subsets of
$G_p^\wedge$ are finite. This complements \cite[Theorem~1.6]{HK} by J.E.~Hart and
K.~Kunen. We conclude, therefore, that the property of a precompact abelian group
$G$ to be \lq{small\rq} (to be Haar nullset in the completion of $G$ or to be of the
first category in itself, or both) does not guarantee the existence of nontrivial convergent
sequences in $G_p^\wedge$.

In the special case when $G$ is an infinite topological subgroup of the circle group
$\mathbb{T}$, our study is intimately related to the so-called \emph{characterized}
subgroups of $\mathbb{T}$ (see the articles \cite{DG,DGT} and the references therein).
In this case, the dual group $G^\wedge$ is algebraically isomorphic to the group
$\mathbb{Z}$ of integers. Given a strictly increasing sequence $B=(n_k)_{k\in\omega}$
of positive integers, one defines $C_B^{\mathbb{T}}$ to be the set of all $x\in\mathbb{T}$
such that $x^{n_k}\to 1$ when $k\to\infty$. A subgroup $G$ of $\mathbb{T}$ is
called \emph{characterized} if $G=C_B^{\mathbb{T}}$, for some sequence
$B\subset\omega$. It is clear from the definition that the sequence $B$ converges
to the identity of the dual group $\mathbb{Z}=(C_B^\mathbb{T})_p^\wedge$ and that
$C_B^\mathbb{T}$ is the biggest subgroup of $\mathbb{T}$ for which $B$ converges.
It is known that $C_B^{\mathbb{T}}$ has measure zero in $\mathbb{T}$, for each
$B\subset\omega$ \cite[Lemma~3.10]{CTW}. This is one of very few general results
about characterized subgroups of $\mathbb{T}$, though the articles dedicated to
their study abound. It is also worth mentioning that the index $[\mathbb{T}:G]$ of
a characterized subgroup $G\leq\mathbb{T}$ is uncountable\,---\,otherwise the
circle group could be covered by a countable family of nullsets, which is impossible.
Needless to say, no internal description of characterized subgroups of $\mathbb{T}$
is available now.

Summing up, searching for nontrivial convergent sequences in the dual groups
of dense subgroups of compact connected groups presents considerable difficulties.
Here, we tackle this question and present some conditions guaranteeing that the
dual group $G_p^\wedge$ of a precompact abelian group $G$ contains a nontrivial
convergent sequence. Clearly, if $G$ is a \emph{countable} infinite precompact
abelian group, then the dual group $G_p^\wedge$ has a countable base and is not
discrete. Hence $G_p^\wedge$ contains nontrivial convergent sequences. We use
this simple observation in the proofs of several results here. We focus our attention on
the study of the duals of precompact abelian \emph{torsion} groups $G$. In this special
case, we characterize in Theorem~\ref{Prop:98} the existence of nontrivial convergent
sequences in $G_p^\wedge$ by the following property of $G$: \emph{No infinite quotient
group of $G$ is countable}. Finally, an example of a reflexive, precompact, abelian group
of the first Baire category is provided.

\subsection{Notation and terminology}
The identity element of a group $G$ is denoted by $e_G$ or simply $e$ if no confusion is
possible. A \emph{character} of a group $G$ is an arbitrary homomorphism of $G$ to the
circle group $\mathbb{T}$. The latter group is identified with the multiplicative subgroup
of the complex numbers $z$ with $|z|=1$. If $G$ is a topological group, a \emph{continuous
character}  of $G$ is a continuous homomorphism of $G$ to $\mathbb{T}$ provided the
latter group carries its usual compact topology inherited from the complex plane $\mathbb{C}$.
We also set $V_1=\{z\in\mathbb{T}: -\pi/2<Arg(z)<\pi/2\}$.

For a given topological group $G$, its \emph{dual group} is denoted by $G^\wedge$. The
dual group consists of the continuous characters of $G$ with the pointwise multiplication,
$(\chi_1\cdot\chi_2)(x)=\chi_1(x)\cdot \chi_2(x)$ for each $x\in G$. The identity element
of $G^\wedge$ is the constant homomorphism $e\colon G\to\mathbb{T}$, $e(x)=1$ for
each $x\in G$. Here $1$ is the identity element of $\mathbb{T}$.

In this article, the dual group $G^\wedge$ will always be endowed with the pointwise
convergence topology whose local base at the identity $e$ is formed by the sets
$$
O(x_1,\ldots,x_n) = \{\chi\in G^\wedge: \chi(x_k)\in V_1\, \mbox{ for each } k=1,\ldots,n\},
$$
where $n$ is a positive integer and $x_1,\ldots,x_n\in G$. The group $G^\wedge$ equipped with
the pointwise convergence topology on $G$ is denoted by $(G^\wedge ,t_p(G))$ or by $G_p^\wedge$, for short,
when there is no possible confusion.

There exist infinite Hausdorff topological abelian groups $G$ with the tri\-vial dual
group $G^\wedge$ (see \cite{Pro,AHK}). If, however, the group $G$ is \emph{precompact}
(equivalently, is topologically isomorphic to a dense subgroup of a compact topological
group), then the continuous characters of $G$ separate points of $G$. Furthermore,
in this case, the topology of $G$ coincides with the topology of pointwise convergence
on elements of the dual group $G^\wedge$. This follows from a theorem of Comfort and
Ross (see \cite[Theorem~1.2]{CR}).

If $N$ is a subgroup of a topological group $G$, we denote by $N^\bot$ the
\emph{annihilator} of $N$, i.e.~the subgroup of $G^\wedge$ which consists
of all characters $\chi\in G^\wedge$ satisfying $\chi(N)=\{1\}$. The group $N^\bot$
is always closed in $G_p^\wedge$. Similarly, if $M$ is a subgroup of $G^\wedge$,
$M^\bot$ denotes the annihilator of $M$ consisting of all elements $x\in G$
satisfying $\chi(x)=1$ for each $\chi\in M$. It is clear that $M^\bot$ is the
intersection of the kernels of the characters of $M$. Again, $M^\bot$ is a
closed subgroup of $G$.

Let $f\colon G\to H$ be a continuous homomorphism of topological groups. We
define the \emph{dual} homomorphism $f^\wedge\colon H_p^\wedge\to G_p^\wedge$
by letting $f^\wedge(\chi)=\chi\circ{f}$ for each $\chi\in H_p^\wedge$. It is easy
to see that $f^\wedge$ is continuous. It is also known that $f^\wedge$ is injective
if $f(G)=H$ and that $f^\wedge$ is onto if $f$ is a topological monomorphism
and $H$ (hence, $G$) is precompact and abelian \cite{DPS,Mor}.

\section{Duals of precompact torsion groups}\label{Sec:Res}

The following dichotomy for homogeneous spaces is a kind of the topological folklore
(see \cite[Theorem~2.3]{LM}).

\begin{lemma}\label{Le:Bai}
If a homogeneous space $X$ is not Baire, then $X$ is of the
first category in itself.
\end{lemma}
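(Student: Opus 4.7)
The plan is to combine the definition of ``not Baire'' with the Banach category theorem, which states that in any topological space the union of an arbitrary family of open meager subsets is itself meager. Homogeneity will be used only to promote a single open meager patch to an open cover of $X$ by such patches.

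First, I would unpack the hypothesis. Since $X$ is not a Baire space, some nonempty open subset $U \subseteq X$ is of the first category in $X$; write $U = \bigcup_{n \in \omega} N_n$ with each $N_n$ nowhere dense in $X$. Fix an auxiliary point $p \in U$.

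Next, I would invoke homogeneity to ``spread'' $U$ around. For each $x \in X$ choose a self-homeomorphism $h_x \colon X \to X$ with $h_x(p) = x$, and set $U_x := h_x(U)$. Each $U_x$ is open (as the image of an open set under a homeomorphism), contains $x$, and is of the first category in $X$ (since $U_x = \bigcup_n h_x(N_n)$ and each $h_x(N_n)$ is nowhere dense). Consequently the family $\{U_x : x \in X\}$ is an open cover of $X$ by meager subsets.

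Finally, I would apply the Banach category theorem to the family $\{U_x : x \in X\}$ to conclude that $X = \bigcup_{x \in X} U_x$ is of the first category in itself, which is exactly the desired conclusion. The only subtle point, and the place where one has to be careful, is that the cover $\{U_x\}$ may be very large (possibly of cardinality $|X|$); a countable sub-cover need not exist in the absence of Lindel\"ofness, so a naive ``countable union of meager sets is meager'' argument is insufficient. It is precisely the \emph{openness} of the pieces $U_x$ that lets Banach's theorem handle an arbitrary-size union, and this is the conceptual crux of the proof.
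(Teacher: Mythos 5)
Your argument is correct. The paper gives no proof of this lemma at all---it is quoted as topological folklore with a pointer to Theorem~2.3 of Lutzer and McCoy---and the proof you supply via the Banach category theorem is exactly the standard argument behind that reference: a non-Baire space has a nonempty open meager set $U$, homogeneity translates $U$ to an open meager neighbourhood of every point, and the Banach category theorem (not a naive countable-union argument) absorbs the possibly uncountable union. Your closing remark correctly isolates the one genuinely delicate point, namely that openness of the pieces is what makes the arbitrary union of meager sets meager.
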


%

In what follows we identify the two-element cyclic group $\Z(2)$ with
the subgroup $\{1,-1\}$ of the multiplicative circle group $\T$. We
start with a simple and well known lemma that will be applied in the proof of
Theorem~\ref{Pro:Mea}. We include a short proof of it here for completeness sake.

\begin{lemma}\label{Le:S}
Let $\chi\colon \Z(2)^A\to \T$ be a homomorphism, where $|A|<\omega$.
Then there exists a set $B\subset A$ such that $\chi(x)=\prod_{i\in B} x(i)$,
for each $x\in \Z(2)^A$.
\end{lemma}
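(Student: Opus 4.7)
The plan is to exploit two elementary features: every element of $\Z(2)^A$ has order dividing $2$, and $\Z(2)^A$ is freely generated (as a $\Z(2)$-vector space) by the standard basis vectors.

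First I would observe that for any $x\in \Z(2)^A$ we have $x^2 = e$, hence $\chi(x)^2 = 1$ in $\T$, so $\chi(x)\in\{1,-1\}=\Z(2)$. In particular the image of $\chi$ lies in $\Z(2)$.

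Next, for each $i\in A$ I would introduce the \lq\lq coordinate\rq\rq\ element $e_i\in\Z(2)^A$ defined by $e_i(i)=-1$ and $e_i(j)=1$ for $j\neq i$. Then I would set
\[
B=\{\,i\in A : \chi(e_i)=-1\,\}.
\]
Since $A$ is finite, every $x\in\Z(2)^A$ factors uniquely as $x=\prod_{i\in S_x} e_i$ where $S_x=\{i\in A : x(i)=-1\}$. Applying $\chi$ and using multiplicativity yields
\[
\chi(x)=\prod_{i\in S_x}\chi(e_i)=(-1)^{|S_x\cap B|}.
\]

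Finally, I would check that this matches the claimed formula: since $x(i)\in\{1,-1\}$, one has $\prod_{i\in B}x(i)=(-1)^{|\{i\in B:\, x(i)=-1\}|}=(-1)^{|S_x\cap B|}$, which agrees with the expression for $\chi(x)$ just computed. There is no real obstacle here; the only thing to be careful about is the finiteness of $A$, which guarantees that the decomposition $x=\prod_{i\in S_x}e_i$ is a finite product and that the formula $\prod_{i\in B}x(i)$ is well-defined without invoking any convergence.
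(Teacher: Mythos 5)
Your proof is correct and rests on the same idea as the paper's: a character of the finite product $\Z(2)^A$ splits coordinatewise, and on each factor $\Z(2)$ it is either trivial or the inclusion $\{1,-1\}\hookrightarrow\T$, the set $B$ recording the nontrivial coordinates. You simply make this explicit via the generators $e_i$ instead of citing the general description of characters of finite products, which is a harmless (and self-contained) variant of the same argument.
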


\begin{proof}
If $H=\prod_{i\in A} H_i$ is a product of finitely many Abelian groups and
$\chi\colon H\to \T$ is a homomorphism, then there exist homomorphisms
$\chi_i\colon H_i\to\T$, for $i\in A$, such that $\chi(x)=\prod_{i\in A}\chi_i(x_i)$
for each $x\in\Z(2)^A$. Since every homomorphism of $\Z(2)$ to $\T$ is
either trivial or a monomorphism, the conclusion of the lemma is now immediate.
\end{proof}

A subset $B$ of a Tychonoff space $X$ is called \emph{bounded} in $X$
if the image $f(B)$ is a bounded subset of the real line, for every continuous
real-valued function $f$ on $X$ (see \cite{HST} or \cite[Section~6.10]{AT}).
It is clear that all compact subsets of $X$ are bounded. Conversely, in a
Diedonn\'e complete space $X$, the closure of every bounded subset
is compact \cite[Proposition~6.10.1\,c)]{AT}.

In the following theorem, we denote by $\mu$ the Haar measure of the
compact group $\Z(2)^\omega$ (see \cite[Chapter~10]{Hal}).

\begin{theorem}\label{Pro:Mea}
There exists an infinite first category subgroup $G$ of the compact group
$\Z(2)^\omega$ such that $\mu(G)=0$ and every bounded subset of the dual
group $G^\wedge_p$ is finite. In particular, $G^\wedge$ does not contain
non-trivial convergent sequences.
\end{theorem}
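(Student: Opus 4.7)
The plan is to take $G$ to be the subgroup of $\Z(2)^\omega$ consisting of all $x$ whose support $\supp(x)=\{i\in\omega:x(i)=-1\}$ has asymptotic density zero in $\omega$. The density-zero sets form an ideal, so $G$ is a subgroup; and $G\supseteq\bigoplus_{n\in\omega}\Z(2)$, hence $G$ is dense in $\Z(2)^\omega$. By the strong law of large numbers, $\mu$-almost every $x$ satisfies $|\supp(x)\cap[0,n)|/n\to 1/2$, so $\mu(G)=0$. To see that $G$ is of first category in itself, fix any $\eps\in(0,1)$ and write $G\sub\bigcup_NA_{\eps,N}$, where $A_{\eps,N}=\{x\in\Z(2)^\omega:|\supp(x)\cap[0,n)|\le\eps n\text{ for all }n\ge N\}$ is closed. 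Each $G\cap A_{\eps,N}$ is nowhere dense in $G$: given a basic clopen set of $\Z(2)^\omega$ fixing $x$ on a finite $F$, we can produce $x'\in G$ of finite support in the same clopen set but with $|\supp(x')\cap[0,n)|>\eps n$ at some $n\ge N$ by placing $-1$'s on a long block of coordinates past $F$.

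The crucial step is to show that $G_p^\wedge$ has no nontrivial convergent sequence. Since $G$ is dense in $\Z(2)^\omega$, Lemma~\ref{Le:S} (together with continuity and compactness of $\Z(2)^\omega$, which force every continuous character to factor through finitely many coordinates) identifies $G^\wedge$ with the countable group $\{\chi_B:B\sub\omega\text{ finite}\}$, where $\chi_B(x)=\prod_{i\in B}x(i)$. Assume toward a contradiction that $(B_n)$ is a sequence of pairwise distinct finite subsets of $\omega$ with $\chi_{B_n}\to 1$ in $G_p^\wedge$, i.e.\ $|B_n\cap\supp(x)|$ is eventually even for every $x\in G$. Testing against $\mathbf{1}_{\{i\}}\in\bigoplus\Z(2)\sub G$ for each $i$ forces $i\notin B_n$ eventually, whence $\min B_n\to\infty$. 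The $\Delta$-system lemma extracts an infinite subsequence $(B_{n_k})$ with all pairwise intersections equal to a common root $R$; any element of $R$ would lie in every $B_{n_k}$, contradicting $\min B_{n_k}\to\infty$, so $R=\emp$ and the $B_{n_k}$ are pairwise disjoint. Thinning further so that $\min B_{n_k}\ge 2^k$ and choosing $s_k\in B_{n_k}$, the set $S=\{s_k:k\in\omega\}$ has density zero, so $\mathbf{1}_S\in G$; disjointness gives $|B_{n_k}\cap S|=1$ for every $k$, so $\chi_{B_{n_k}}(\mathbf{1}_S)=-1$ for all $k$, contradicting $\chi_{B_n}\to 1$.

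Finally, every bounded subset of $G_p^\wedge$ is finite. Indeed $G_p^\wedge$ is countable and Hausdorff, hence Lindel\"of and realcompact, so the closure of any bounded set is compact. A countable compact Hausdorff space is metrizable (it is homeomorphic to a countable successor ordinal), and any infinite such space admits a nontrivial convergent sequence; this contradicts the previous paragraph unless every compact, and therefore every bounded, subset of $G_p^\wedge$ is finite. In particular $G^\wedge$ contains no nontrivial convergent sequences.

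The main obstacle is the $\Delta$-system reduction to pairwise disjoint $B_{n_k}$: once obtained, building a single density-zero witness $S$ with $|B_{n_k}\cap S|=1$ for all $k$ is immediate, but without disjointness a naive choice of representatives could contribute extra elements to some $B_{n_k}\cap S$, producing even intersections that fail to detect the sequence.
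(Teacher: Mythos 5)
Your construction of $G$ is the same as the paper's (supports of asymptotic density zero), and the measure-zero, first-category and bounded-sets parts run essentially as in the paper (your appeal to the strong law of large numbers for $\mu(G)=0$ is a clean alternative to the paper's argument via additivity and the infinite index of $G$). The central step, however, is done by a genuinely different route. The paper does not first disjointify: it keeps the minimal finite dependence sets $A_n$, passes to a subsequence where each $A_n$ contains a ``new'' largest element $k_n\notin\bigcup_{m<n}A_m$ with $\{k_n:n\in\om\}$ thin, and then runs a recursive construction of a single element $x^*\in G$ with $\supp(x^*)\sub\{k_n:n\in\om\}$ and $\chi_n(x^*)=-1$ for all $n$, using at each stage that flipping the coordinate $k_{n+1}$ changes the value of $\psi_{n+1}$. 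Your argument instead reduces to pairwise disjoint supports $B_{n_k}$ and then takes a sparse transversal $S$ with $|B_{n_k}\cap S|=1$; this is shorter and avoids the recursion, at the cost of relying on the specific combinatorics of $\Z(2)$ (parity of $|B\cap\supp(x)|$), which is exactly the setting at hand. Both arguments hinge on the same heuristic: the witness must itself have thin support, which is why the paper thins $\{k_n\}$ and you impose $\min B_{n_k}\ge 2^k$.

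One caveat: your appeal to ``the $\Delta$-system lemma'' is not legitimate as stated. An infinite \emph{countable} family of finite sets of unbounded size need not contain an infinite $\Delta$-subsystem (take $B_n=\{0,1,\dots,n\}$); the sunflower lemma requires either uncountably many sets or a uniform bound on their size. Fortunately you do not need it: from $\min B_n\to\infty$, which you have already established, you can extract a pairwise disjoint subsequence directly by choosing $n_{k+1}$ with $\min B_{n_{k+1}}>\max\bigl(B_{n_0}\cup\dots\cup B_{n_k}\bigr)$. With that one-line replacement the proof is complete and correct.
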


\begin{proof}
Let call a subset $A$ of $\omega$ \textit{thin} if
\[
\lim_{k\to\infty} \frac{|k\cap A|}{k}=0,
\]
where each positive integer $k\in\omega$ is identified with the set
$\{0,\ldots,k-1\}$. For an element $x\in \Z(2)^\omega$, let
\[
\supp(x)=\{n\in\omega: x(n)=1\}.
\]
Denote by $G$ the set of all $x\in \Z(2)^\omega$ such that $\supp(x)$
is a thin subset of $\omega$. It is clear that $G$ is a dense
subgroup of $\Z(2)^\omega$ and that $G$ is precompact.

First, we claim that $G$ is of the first category in $\Z(2)^\omega$.
Indeed, for positive integers $m$ and $N$, let
\[
O_{m,N}=\{x\in \Z(2)^\omega: \exists\, k\geq m\, \mbox{ such that }\,
\frac{|\supp(x)\cap k|}{k}\geq 1/N\}.
\]
It is easy to see that the sets $O_{m,N}$ are open and dense in
$\Z(2)^\omega$ whenever $N\geq 2$, while our definition of $G$
implies that $G$ is disjoint from the set $\bigcap_{k,N\geq 2}
O_{m,N}$. Hence $G$ is contained in the first category set
$\bigcup_{m,N\geq 2} F_{m,N}$, where $F_{m,N}=\Z(2)^\omega\sm
O_{m,N}$. This proves our claim.

Let us show that $\mu(G)=0$, where $\mu$ is the Haar measure
on the compact group $\Z(2)^\omega$. It is easy to see that $G$ is
measurable. Indeed, every set $O_{m,N}$ is open in the compact
second countable group $\Z(2)^\omega$ and, hence, is measurable.
Hence the sets $F_{m,N}=\Z(2)^\omega\sm O_{m,N}$ are also
measurable. The definition of the group $G$ implies that
$$
G=\bigcap_{N=1}^\infty \bigcup_{m=0}^\infty F_{m,N},
$$
whence it follows that $G$ is measurable. Since the index of $G$
in $\Z(2)^\omega$ is infinite, no finite number of cosets of $G$ in
$\Z(2)^\omega$ covers the group $\Z(2)^\omega$. Hence, by the
additivity of $\mu$, we conclude that $\mu(G)=0$.

Now we verify that the dual group $G^\wedge_p$ does not contain
non-trivial sequences converging to the identity element of $G^\wedge_p$.
Consider a sequence $\{\chi_n: n\in\om\}\sub G^\wedge_p$. We can
assume without loss of generality that $\chi_n\neq \chi_m$ if
$n\neq m$. Since $G$ is dense in $\Z(2)^\omega$, every $\chi_n$
extends to a continuous character on $\Z(2)^\omega$. We denote this
extension by $\varphi_n$, for each $n\in\omega$. According to \cite{Kap},
every $\varphi_n$ depends on finitely many coordinates, i.e.~one can find
a finite set $A_n\subset\omega$ and a character $\psi_n$ on $\Z(2)^{A_n}$
such that $\varphi_n=\psi_n\circ\pi_{A_n}$, where $\pi_{A_n}\colon
\Z(2)^\omega \to \Z(2)^{A_n}$ is the projection. Clearly, we can
assume that $A_n$ is a minimal (by inclusion) set with this
property. It then follows from Lemma~\ref{Le:S} that
$\psi_n(x)=\prod_{i\in A_n} x(i)$, for each $x\in \Z_2^{A_n}$.

For every finite $A\subset\omega$, there exist only finitely many
homomorphisms of $\Z(2)^A$ to $\T$. Hence, choosing a subsequence
of $\{\chi_n: n\in\omega\}$, if necessary, we can assume that the sets
$A_n$ are non-empty and that each $A_n$ is not covered by the sets
$A_m$ with $m<n$. For every $n\in\om$, let $k_n$ be the biggest
element of the complement $A_n\sm\bigcup_{m<n} A_m$. Choosing
a subsequence of $\{\chi_n: n\in\om\}$ once again, we can assume
additionally that the set $K=\{k_n: n\in\om\}$ is thin.

Let us define an element $x^*\in G$ as follows. Choose $x_0\in
\Z(2)^{A_0}$ such that $\psi_0(x_0)=-1$ and $x_0(k)=1$ for each $k\in
A_0$ distinct from $k_0$. Assume that for some $n\in\om$, we have
defined elements $x_i\in\Z(2)^{A_i}$ with $i\leq n$ satisfying the
following conditions:
\begin{enumerate}
\item[(a)] $x_i(k)=x_j(k)$ whenever $j<i$ and $k\in A_i\cap A_j$;
\item[(b)] if $x_i(k)=-1$, then $k=k_j$ for some $j\leq i$;
\item[(c)] $\psi_i(x_i)=-1$.
\end{enumerate}
Let $y_1$ and $y_2$ be elements of $\Z(2)^{A_{n+1}}$ such that
$y_1(k)=y_2(k)=x_i(k)$ whenever $k\in A_{n+1}\cap A_i$ for some
$i\leq n$, $y_1(k)=y_2(k)=1$ for each $k\in A_{n+1}\sm
\bigcup_{i\leq n} A_i$ distinct from $k_{n+1}$, and
$y_1(k_{n+1})\neq y_2(k_{n+1})$. It follows from the choice of
$A_{n+1}$ and $k_{n+1}\in A_{n+1}$ that $\psi_{n+1}(y_1)\neq
\psi_{n+1}(y_2)$. Hence $\psi_{n+1}(y_i)=-1$ for some $i\in\{1,2\}$.
We put $x_{n+1}=y_i$. Then the elements $x_0,x_1,\ldots,x_{n+1}$
satisfy conditions (a)--(c).

Denote by $x^*$ an element of $\Z(2)^\omega$ such that
$\pi_{A_n}(x^*)=x_n$ for each $n\in\om$ (we apply (a) here) and
$x^*(k)=1$ if $k\in\om\sm\bigcup_{n\in\om} A_n$. Since the set
$K=\{k_n: n\in\om\}$ is thin, (b) implies that $\supp(x^*)\sub K$
and hence $x^*\in G$. It follows from (c) and our definition of $x^*$
that $\chi_n(x^*)= \psi_n(\pi_{A_n}(x^*))=\psi_n(x_n)=-1$, for
each $n\in\om$. Therefore, the sequence $\{\chi_n: n\in\omega\}$
does not converge to the identity in $G^\wedge_p$. This implies
that $G$ does not contain non-trivial convergent sequences at all.

Since $G$ is second countable and precompact, the dual group
$G_p^\wedge$ is countable. Suppose that $B$ is an infinite bounded
subset of $G_p^\wedge$. Then so is $\overline{B}$, the closure of $B$
in $G_p^\wedge$. Also, it follows from $|G_p^\wedge|=\omega$ that
the space $G_p^\wedge$ is Lindel\"of and, hence, Dieudonn\'e complete.
Therefore, $\overline{B}$ is compact. But every countably infinite compact
space contains non-trivial convergent sequences, which is a contradiction.
\end{proof}

\begin{remark}
We have shown in Theorem~\ref{Pro:Mea} that all bounded subsets of
$G_p^\wedge$ are finite. This is equivalent to saying that every infinite set
$X\sub G_p^\wedge$ contains an infinite subset $D$ such that $D$ is closed
and discrete in $G_p^\wedge$ and $C$-embedded in $G_p^\wedge$. The
equivalence follows immediately from the fact that the space $G_p^\wedge$ is
countable and regular, hence, normal.
\end{remark}

\begin{propo}\label{Prop:6}
Let $G$ be a precompact topological abelian group. The following conditions
are equivalent:
\begin{enumerate}
\item[{\rm (1)}] $G$ has a countably infinite Hausdorff quotient;
\item[{\rm (2)}] $G$ has a countably infinite Hausdorff homomorphic image;
\item[{\rm (3)}] $G_p^\wedge$ contains an infinite metrizable subgroup.
\end{enumerate}
\end{propo}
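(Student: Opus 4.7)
The plan is to establish (1) $\Leftrightarrow$ (2), (1) $\Rightarrow$ (3), and (3) $\Rightarrow$ (2); together these give the equivalence of all three conditions. The equivalence (1) $\Leftrightarrow$ (2) is routine: any quotient is a homomorphic image, and conversely, if $f\colon G\to H'$ is a continuous homomorphism whose image is countably infinite and Hausdorff, then $\ker f$ is closed in $G$ (as the preimage of $\{e\}$, a closed point of $f(G)$), so $G/\ker f$ is a countably infinite Hausdorff quotient of $G$.

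For (1) $\Rightarrow$ (3), let $N$ be a closed subgroup of $G$ with $G/N$ countably infinite and Hausdorff. Since $G/N$ is a countable precompact abelian group, the elementary observation from the introduction gives that $(G/N)_p^\wedge$ has a countable base at the identity, hence is metrizable. The dual homomorphism $q^\wedge\colon (G/N)_p^\wedge\to G_p^\wedge$ of the quotient map $q\colon G\to G/N$ is injective with image $N^\bot$, and a direct inspection of the two pointwise topologies shows that $q^\wedge$ is a topological embedding. Since the characters of $G/N$ separate points of the infinite group $G/N$, the dual $(G/N)_p^\wedge$ is infinite, so $N^\bot$ is an infinite metrizable subgroup of $G_p^\wedge$.

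The content of the proposition is (3) $\Rightarrow$ (2). Given an infinite metrizable subgroup $H\subseteq G_p^\wedge$, note that $H$ is precompact (as a subgroup of $G_p^\wedge\subseteq \T^G$) and metrizable, hence second countable; its completion $\widetilde H$ is thus a compact metrizable abelian group, and its Pontryagin dual $\widetilde H^\wedge$ is a countably infinite discrete abelian group. For each $g\in G$ the evaluation $\mathrm{ev}_g\colon H\to\T$, $\chi\mapsto\chi(g)$, is a continuous homomorphism (because $H$ carries the topology of pointwise convergence on $G$), and so extends uniquely to a continuous character $\widetilde{\mathrm{ev}_g}\in\widetilde H^\wedge$. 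Set $\Phi(g)=\widetilde{\mathrm{ev}_g}$ and equip the countable group $\widetilde H^\wedge$ with the topology $\tau_H$ of pointwise convergence on the dense subset $H$ of $\widetilde H$, which is Hausdorff. A basic neighborhood of the identity in $(\widetilde H^\wedge,\tau_H)$ has the form $\{\psi:\psi(h_i)\in V_1,\ i\le n\}$ with $h_i\in H$, and its $\Phi$-preimage equals $\bigcap_{i\le n} h_i^{-1}(V_1)$, which is open in $G$ because each $h_i$ is a continuous character of $G$; so $\Phi$ is continuous. Its kernel equals $H^\bot$, and $H\subseteq H^{\bot\bot}$; if $\Phi(G)\cong G/H^\bot$ were finite then $H^{\bot\bot}$, the character group of the finite abelian group $G/H^\bot$, would also be finite, contradicting $H$ being infinite. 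Therefore $\Phi(G)\subseteq\widetilde H^\wedge$ is a countably infinite Hausdorff homomorphic image of $G$, establishing (2).

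The main obstacle is choosing the right topology on $\widetilde H^\wedge$ for (3) $\Rightarrow$ (2): the natural discrete Pontryagin topology would force $H^\bot=\Phi^{-1}(\{e\})$ to be open in $G$, which typically fails, so one is obliged to replace it with the coarser pointwise topology induced by $H$, and then check separately that this topology is Hausdorff, that $\Phi$ remains continuous into it, and that the image is still infinite via the duality bound $|H^{\bot\bot}|=|G/H^\bot|$ in the finite case.
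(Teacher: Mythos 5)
Your proposal is correct, and on the essential implication (3)\,$\Rightarrow$\,(2) it takes a genuinely different route from the paper. The paper first replaces the metrizable subgroup by its closure $\Gamma$ in $G_p^\wedge$, then invokes the Raczkowski--Trigos-Arrieta theorem \cite{RT} that the canonical evaluation $G\to (G_p^\wedge)_p^\wedge$ is a topological isomorphism, together with the surjectivity of the dual $i^\wedge$ of the inclusion $i\colon\Gamma\to G_p^\wedge$, to identify $G/\Gamma^\bot$ with the countably infinite group $\Gamma^\wedge$ and thereby obtain condition (1) directly. You avoid this duality machinery entirely: you pass to the compact metrizable completion $\widetilde H$, whose dual is countable, and build the evaluation homomorphism $\Phi\colon G\to\widetilde H^\wedge$ by hand, topologizing the target by pointwise convergence on $H$; the infiniteness of the image is then settled by the elementary annihilator bound $H\subseteq H^{\bot\bot}$ rather than by the surjectivity of $i^\wedge$. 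In effect you reprove exactly the fragment of the \cite{RT} duality that is needed, which makes your argument more self-contained and elementary, at the cost of being longer and of landing on condition (2) rather than (1) (which is harmless given your cycle (1)\,$\Leftrightarrow$\,(2), (1)\,$\Rightarrow$\,(3), (3)\,$\Rightarrow$\,(2)). One small imprecision worth fixing in the (1)\,$\Rightarrow$\,(3) step: the inference \lq\lq{}the characters separate points of the infinite group $G/N$, hence the dual is infinite\rq\rq{} is not valid as literally stated (a single character already separates points of any infinite subgroup of $\T$); the correct reason, which you in fact use later, is that the dual of an infinite precompact Hausdorff abelian group coincides with the dual of its infinite compact completion and is therefore infinite.
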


\begin{proof}
(2)\,$\Rightarrow$\,(1). Let $\varphi\colon G\longrightarrow H$ be a continuous
homomorphism onto a countably infinite Hausdorff topological group $H$. Then
$N=\ker\varphi$ is a closed subgroup of $G$. Let $\pi\colon G\to G/N$ be the quotient
homomorphism. Clearly, there is a continuous homomorphism $\varphi_N\colon G/N
\longrightarrow H$ such that $\varphi=\varphi_N\circ \pi$. It is clear from the definition
that $\varphi_N$ is one-to-one, which implies that $G/N$ is countable and Hausdorff.

(1)\,$\Rightarrow$\,(3). Let $\pi\colon G\longrightarrow H$ be a quotient homomorphism
onto a countably infinite Hausdorff topological group $H$. Clearly $H$ is precompact.
Hence $H_p^\wedge$ is an infinite metrizable group and $\pi^\wedge\colon H_p^\wedge\to
G_p^\wedge$ is a monomorphism, which is easily seen to be a topological embedding in
$G_p^\wedge$. This shows that $\pi^\wedge(H_p^\wedge)$ is an infinite metrizable subgroup
of $G_p^\wedge$.

(3)\,$\Rightarrow$\,(2). Let $\Gamma$ be an infinite metrizable subgroup of $G_p^\wedge$.
Then so is $\overline{\Gamma}$, the closure of $\Gamma$ in $G_p^\wedge$ (see
\cite[Proposition~1.4.16]{AT}). Hence we can assume that $\Gamma$ is closed in $G_p^\wedge$.
Let $i\colon \Gamma\to G_p^\wedge$ be the identity embedding. Then the dual homomorphism
$i^\wedge\colon (G_p^\wedge)_p^\wedge\to \Gamma_p^\wedge$ is continuous and surjective,
while the kernel of $i^\wedge$ is $\Gamma^\bot$. According to \cite{RT}, the canonical
evaluation mapping of $G$ to $(G_p^\wedge)_p^\wedge$ is a topological isomorphism,
so we can identify the groups $G$ and $(G_p^\wedge)_p^\wedge$. We have thus shown
that the abstract groups $G/\Gamma^\bot$ and $\Gamma^\wedge$ are isomorphic. Since
the latter group is countable and infinite as the dual of an infinite metrizable precompact
group and $\Gamma^\bot$ is closed in $G$, we conclude that $G/\Gamma^\bot$ is a
countably infinite Hausdorff quotient of $G$. This completes the proof.
\end{proof}

As an obvious corollary we obtain:

\begin{coro}
Let $G$ be a precompact abelian group. If $G_p^\wedge$ does not contain non-trivial
convergent sequences, then no infinite Hausdorff continuous homomorphic image of
$G$ is countable.
\end{coro}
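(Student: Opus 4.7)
The plan is to prove the contrapositive: assume $G$ admits a continuous surjective homomorphism $\pi\colon G\to H$ onto a countably infinite Hausdorff topological group $H$, and produce a non-trivial convergent sequence in $G_p^\wedge$. All the ingredients needed are already available in the excerpt, so this really is an ``obvious corollary'' of Proposition~\ref{Prop:6}.

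First I would observe that $H$ is precompact, being a continuous homomorphic image of the precompact group $G$. Since $H$ is countable and infinite, the remark recorded in the introduction (that the dual of a countable infinite precompact abelian group has a countable base and is not discrete) guarantees that $H_p^\wedge$ contains a non-trivial sequence $(\chi_n)_{n\in\omega}$ converging to the identity of $H_p^\wedge$.

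Next I would transfer this sequence to $G_p^\wedge$ via the dual map $\pi^\wedge\colon H_p^\wedge\to G_p^\wedge$, $\pi^\wedge(\chi)=\chi\circ\pi$. It was recorded in the Notation subsection that $\pi^\wedge$ is continuous, and $\pi^\wedge$ is injective because $\pi(G)=H$: any character $\chi\in H^\wedge$ with $\chi\circ\pi=e$ must be trivial on $H$. Continuity sends the sequence $(\chi_n)$ to a sequence $(\chi_n\circ\pi)$ converging to the identity of $G_p^\wedge$, and injectivity ensures that none of the terms $\chi_n\circ\pi$ coincides with the identity character of $G$, since the $\chi_n$ are distinct from the identity of $H^\wedge$. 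Hence $G_p^\wedge$ contains a non-trivial convergent sequence, contradicting the hypothesis.

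There is no genuine obstacle here: the heavy lifting is done by Proposition~\ref{Prop:6} together with the folklore fact on duals of countable precompact abelian groups highlighted in the introduction. An alternative route would be to invoke the implication $(2)\Rightarrow(3)$ of Proposition~\ref{Prop:6} directly to extract an infinite metrizable subgroup $\Gamma\leq G_p^\wedge$ and then argue that $\Gamma$ cannot be discrete (again by the same cited fact, since the proof of $(1)\Rightarrow(3)$ realises $\Gamma$ as a topological copy of $H_p^\wedge$), so that $\Gamma$ contains a non-trivial convergent sequence; but the direct route through $\pi^\wedge$ above is more transparent.
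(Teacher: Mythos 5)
Your argument is correct and is essentially the route the paper intends: the corollary is stated as an immediate consequence of Proposition~2.5, whose proof of (1)\,$\Rightarrow$\,(3) uses exactly your map $\pi^\wedge$ to embed $H_p^\wedge$ into $G_p^\wedge$, combined with the introduction's observation that the dual of a countably infinite precompact abelian group is infinite, metrizable and non-discrete, hence has a non-trivial convergent sequence. You have merely written out the details the authors leave implicit, so there is nothing to add.
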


Let us turn to torsion abelian groups. In this case, we characterize the existence of
nontrivial convergent sequences in the dual group $G_p^\wedge$ by any of the
equivalent conditions on $G$ given in Proposition~\ref{Prop:6}.

In the proof of Theorem~\ref{Prop:98}
we denote by $D^{(\omega)}$ the subgroup of $D^\omega$ which consists of all
elements which differ from the identity of $D$ on at most finitely many coordinates.

\begin{theorem}\label{Prop:98}
For a precompact torsion abelian group $G$, the following conditions are equivalent:
\begin{enumerate}
\item[{\rm (1)}] $G$ has a countably infinite Hausdorff quotient;
\item[{\rm (2)}]  $G$ has a countably infinite Hausdorff homomorphic image;
\item[{\rm (3)}]  $G_p^\wedge$ contains an infinite metrizable subgroup;
\item[{\rm (4)}]  $G_p^\wedge$ contains a non-trivial convergent sequence.
\end{enumerate}
\end{theorem}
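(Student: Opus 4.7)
The plan is to invoke Proposition~\ref{Prop:6} to reduce the statement to the equivalence $(3)\Leftrightarrow(4)$, since the chain $(1)\Leftrightarrow(2)\Leftrightarrow(3)$ is already available for every precompact abelian group. The implication $(3)\Rightarrow(4)$ will not actually need the torsion hypothesis: $G_p^\wedge$ sits as a subgroup of the compact group $\T^G$, so every subgroup $\Gamma\leq G_p^\wedge$ is precompact. Since a discrete precompact group must be finite (a discrete subgroup of its own completion is both open and dense, hence equal to the completion, which is therefore finite), an infinite metrizable $\Gamma$ cannot be discrete; its identity is a non-isolated point, and metrizability delivers a sequence of distinct elements of $\Gamma$ converging to $e$, that is, a nontrivial convergent sequence in $G_p^\wedge$.

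The main work is $(4)\Rightarrow(1)$, and this is where the torsion hypothesis enters. Let $\{\chi_n:n\in\omega\}\subseteq G_p^\wedge$ be a nontrivial convergent sequence; after translating by the inverse of the limit I may assume that $\chi_n\to e$, and after passing to a subsequence I may assume the $\chi_n$ are pairwise distinct and different from $e$. The torsion of $G$ is exploited through the elementary observation that if $x\in G$ has order $k$ then $\chi_n(x)$ lies in the finite group of $k$th roots of unity in $\T$; since $\chi_n(x)\to 1$, we must have $\chi_n(x)=1$ for almost all $n$. Consequently the map
\[
\Phi\colon G\longrightarrow \bigoplus_{n\in\omega}(\Q/\Z),\qquad \Phi(x)=(\chi_n(x))_{n\in\omega},
\]
is a well-defined homomorphism into a \emph{countable} group.

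Setting $\Gamma=\langle\chi_n:n\in\omega\rangle\leq G_p^\wedge$, the kernel of $\Phi$ coincides with the annihilator $\Gamma^\bot$, which is closed in $G$; hence the Hausdorff quotient $G/\Gamma^\bot$ embeds in a countable group and is itself countable. If it were finite, it would carry only finitely many characters into $\T$, which is incompatible with the fact that the pairwise distinct $\chi_n$ all factor through $G/\Gamma^\bot$; so $G/\Gamma^\bot$ is countably infinite, establishing $(1)$. The one genuinely substantive step is thus the torsion observation forcing $\chi_n(x)=1$ eventually for every $x\in G$ — precisely the point that collapses in the nontorsion setting, where $\chi_n(x)$ may take values in roots of unity of unbounded order and pointwise convergence no longer entails eventual equality. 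Once this observation is in hand, the rest (identifying $\ker\Phi$ with $\Gamma^\bot$, verifying the Hausdorff property, ruling out finiteness via distinctness of the $\chi_n$, and closing the loop through Proposition~\ref{Prop:6}) is routine.
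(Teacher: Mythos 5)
Your proposal is correct and follows essentially the same route as the paper: reduce to $(4)\Rightarrow(1)/(2)$ via Proposition~\ref{Prop:6}, and use the torsion hypothesis to show that $\chi_n(x)=1$ for all but finitely many $n$, so the diagonal map lands in a countable group (your $\bigoplus_n(\Q/\Z)$ is the paper's $D^{(\omega)}$), with infiniteness of the image forced by the distinctness of the $\chi_n$. The only cosmetic difference is that you pass to the quotient $G/\Gamma^\bot$ to land on condition (1) directly, whereas the paper exhibits $f(G)$ as a countably infinite Hausdorff homomorphic image, i.e.\ condition (2).
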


\begin{proof}
The equivalence of (1), (2) and (3) was established in Proposition~\ref{Prop:6}.
The implication (3)\,$\Rightarrow$\,(4) is evident since an infinite precompact
group is non-discrete. Therefore we only need to verify that (4)\,$\Rightarrow$\,(2).

Let  $\{\chi_n: n\in\omega\}$ be a non-trivial sequence converging to the identity
element of $G_p^\wedge$. We can assume that $\chi_n\neq\chi_m$ if $n\neq m$.
Let $f$ be the diagonal product of the family $\{\chi_n: n\in\omega\}$. Clearly
$f$ is a continuous homomorphism of $G$ to the product group $\mathbb{T}^\omega$.
Take an arbitrary element $x\in G$ with $x\neq e$ and let $m$ be the order of $x$.
Then $m$ is finite since $G$ is a torsion group. Each value $\chi_n(x)$ belongs to
the cyclic subgroup of $\mathbb{T}$ generated by an element $b\in\mathbb{T}$
of order $m$. Since the sequence $\{\chi_n(x): n\in\omega \}$ converges to
$1\in \mathbb T$ for every $x\in G$, the latter implies that there is an integer
$n(x)\geq 0$ such that $\chi_n(x)=1$ for all $n\geq n(x)$. In other words,
$f(x)\in D^{(\omega)}$ for all $x\in G$, where $D$ is the torsion subgroup
of $\mathbb{T}$. This implies that the image $f(G)$ is countable.

We claim that $f(G)$ is infinite. Indeed, it follows from our definition of $f$ that
for every $n\in\omega$, there exists a continuous character $\mu_n$ of $f(G)$
satisfying $\chi_n=\mu_n\circ{f}$. Notice that the characters $f^\wedge(\mu_n)=
\mu_n\circ{f}=\chi_n$ of the group $G$ are pairwise distinct, so the set
$\{\mu_n: n\in\omega\}\subset f(G)_p^\wedge$ is infinite. Hence the group
$f(G)$ is infinite as well.
\end{proof}

\section{Compact subsets of dual groups}\label{GenCase}

In this section we are concerned with the topological groups that are not necessarily torsion.
First, we present a machinery for finding arbitrary compact subsets in the duals of precompact
abelian groups. In order to do so, a basic ingredient will be the group $C_p(K,\T)$ of all
continuous functions of a compact space $K$ into the torus $\T$, equipped with the pointwise
convergence topology.

Since $C_p(K,\T)$ is dense in the compact group $\T^K$, it follows that the dual
group of $C_p(K,\T)$ is algebraically isomorphic to $A(K)$. Taking into account that, throughout
this article, the dual group is always equipped with the pointwise convergence topology, we have
that the topological dual of $C_p(K,\T)$ is the group $A(K)$ equipped with the topology of pointwise
convergence on elements $C(K,\T)$. This topology coincides with the Bohr topology of the free abelian
group $A(K)$ (cf. \cite{GalHer:FM}). Thus we have:

\begin{propo}\label{Dual_C_p2}
If $K$ is a compact space, then the dual group of $C_p(K,\T)$ endowed with the
pointwise convergence topology is topologically isomorphic to the free abelian topological
group $A(K)$ on $K$ equipped with its Bohr topology.
\end{propo}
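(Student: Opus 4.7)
The plan is to use the fact already observed in the paper that the dual group of $C_p(K,\T)$ is algebraically isomorphic to $A(K)$, and then to check that this algebraic isomorphism transports the pointwise convergence topology on the dual to the Bohr topology on $A(K)$. Two classical ingredients are needed: the description of the continuous characters of $\T^K$ as finitely supported monomials, and the universal property of the free abelian topological group $A(K)$.

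First, I would make the algebraic isomorphism explicit. The group $C_p(K,\T)$ carries the subspace topology inherited from $\T^K$ and is dense there, so every continuous character $\chi$ of $C_p(K,\T)$ is uniformly continuous (with respect to an invariant metric on $\T$) and extends uniquely to a continuous character $\tilde{\chi}$ of the compact group $\T^K$. By the classical description of $(\T^K)^\wedge$, every such $\tilde{\chi}$ has the form $(z_k)_{k\in K}\mapsto \prod_{k\in F} z_k^{n_k}$ for some finite $F\subset K$ and integers $n_k\in\Z$. Sending $\chi$ to the formal sum $\sigma_\chi=\sum_{k\in F} n_k[k]\in A(K)$ defines a bijection $\Phi\colon (C_p(K,\T))^\wedge\to A(K)$, and the evaluation pairing becomes $\chi(f)=\prod_{k\in F} f(k)^{n_k}$ for every $f\in C(K,\T)$.

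Next, I would compare the two topologies. By definition, a net $\chi_\alpha$ converges to the identity in the dual of $C_p(K,\T)$ iff $\chi_\alpha(f)\to 1$ for every $f\in C(K,\T)$. Transporting this via $\Phi$, it translates into: for every $f\in C(K,\T)$, the homomorphism $\widehat{f}\colon A(K)\to\T$ defined by $\sum n_k[k]\mapsto \prod f(k)^{n_k}$ satisfies $\widehat{f}(\Phi(\chi_\alpha))\to 1$. By the universal property of the free abelian topological group $A(K)$, the continuous characters of $A(K)$ are precisely the homomorphisms $\widehat{f}$ with $f\in C(K,\T)$, and conversely each such $\widehat{f}$ is continuous. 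Therefore the topology just described on $A(K)$ coincides with the initial topology generated by its continuous characters, that is, with the Bohr topology.

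Combining these observations, $\Phi$ is a topological isomorphism between $(C_p(K,\T))^\wedge_p$ and $A(K)$ equipped with its Bohr topology, which is the content of the proposition. The step that requires the most care is verifying that every continuous character of $C_p(K,\T)$ really extends to $\T^K$; but this is immediate once one notes that the subspace topology on $C(K,\T)\subset\T^K$ coincides with $t_p$, since both are generated by the point evaluations $f\mapsto f(k)$, $k\in K$. The remaining work is bookkeeping using well-known facts about duals of compact product groups and about free abelian topological groups, for which the paper provides references.
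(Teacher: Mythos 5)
Your argument is correct and follows essentially the same route as the paper: the paper likewise obtains the algebraic isomorphism from the density of $C_p(K,\T)$ in $\T^K$ and then identifies the topology of pointwise convergence on elements of $C(K,\T)$ with the Bohr topology of $A(K)$, citing Galindo--Hern\'andez for that last identification, which you instead derive directly from the universal property of the free abelian topological group. No gaps; the only difference is that you fill in the cited step rather than referencing it.
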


We remark that the free abelian group $A(K)$ \emph{respects compactness},
which means that every Bohr-compact subset of $A(K)$ is also compact in $A(K)$
(see \cite[Corollary~4.20]{GalHer:FM}).

\begin{defi}\label{Def_3.1}
Given a compact space $K$, we say that a subgroup $G$ of $C_p(K,\T)$ is \emph{separating}
if for any two distinct elements $x,y\in K$, there is $g\in G$ such that $g(x)\not=g(y)$.
\end{defi}

Let $G$ be a topological abelian group. As a consequence of Proposition~\ref{Dual_C_p2}
we obtain the following result:

\begin{propo}\label{Pr_3.1}
The dual $G_p^\wedge$ of a precompact abelian group $G$ contains a copy of a compact
space $K$ if and only if there is a continuous homomorphism of $G$ onto a separating
subgroup of $C_p(K,\T)$.
\end{propo}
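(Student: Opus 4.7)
Plan for Proposition~\ref{Pr_3.1}:

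The proof is a clean duality argument; each direction amounts to writing down the natural evaluation map and checking it has the required properties.

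For the forward direction, suppose $i\colon K\hookrightarrow G_p^\wedge$ is a topological embedding. I would define $\Phi\colon G\to \T^K$ by $\Phi(g)(k)=i(k)(g)$, swapping the roles of argument and index in the evaluation pairing. For each fixed $g\in G$, the map $k\mapsto i(k)(g)$ is the $g$-th coordinate of the inclusion $K\hookrightarrow G_p^\wedge$ (whose topology is pointwise convergence on $G$), so $\Phi(g)\in C(K,\T)$; thus $\Phi$ takes values in $C_p(K,\T)$. It is plainly a homomorphism, and it is continuous into $C_p(K,\T)$ because for each fixed $k$ the coordinate map $g\mapsto \Phi(g)(k)=i(k)(g)$ is the continuous character $i(k)$. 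Finally, if $k_1\neq k_2$ in $K$, then $i(k_1)\neq i(k_2)$ as characters on $G$, so some $g\in G$ witnesses $\Phi(g)(k_1)\neq \Phi(g)(k_2)$; hence $\Phi(G)$ is a separating subgroup of $C_p(K,\T)$, and $\Phi$, viewed as a map onto its image, is the required homomorphism.

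For the backward direction, let $\psi\colon G\to H$ be a continuous surjective homomorphism onto a separating subgroup $H\leq C_p(K,\T)$, and define $E\colon K\to H_p^\wedge$ by $E(k)(h)=h(k)$. The map $E(k)$ is a continuous character of $H$ (point-evaluation is continuous in the pointwise convergence topology), $E$ is injective since $H$ separates points of $K$, and $E$ is continuous into $H_p^\wedge$ because for each fixed $h\in H$ the map $k\mapsto h(k)$ is continuous on $K$. The dual homomorphism $\psi^\wedge\colon H_p^\wedge\to G_p^\wedge$, $\chi\mapsto \chi\circ\psi$, is continuous by the definitions of the pointwise topologies, and injective because $\psi(G)=H$. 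Consequently, $\psi^\wedge\circ E\colon K\to G_p^\wedge$ is a continuous injection from a compact space into a Hausdorff space, hence a topological embedding, producing the desired copy of $K$ in $G_p^\wedge$.

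The argument is really just the standard duality calculus, so I do not expect any genuine obstacle. The only points requiring a moment of care are: (a) verifying that $E$ actually lands in $H_p^\wedge$, which reduces to the continuity of point-evaluation on $H$ in the pointwise topology; and (b) checking continuity and injectivity of $\psi^\wedge$. Both follow immediately from the definitions, and the rest is a compactness-plus-Hausdorff upgrade of an injective continuous map to a topological embedding.
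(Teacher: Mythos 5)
Your proof is correct, but it takes a more elementary route than the paper. The paper funnels both directions through the free abelian topological group: it first identifies $C_p(K,\T)_p^\wedge$ with $(A(K),t_p(C(K,\T)))$ (Proposition~\ref{Dual_C_p2}), then for sufficiency restricts the dual homomorphism $\varphi^\wedge\colon (A(K),t_p(C(K,\T)))\to G_p^\wedge$ to the copy of $K$ inside $A(K)$, and for necessity uses the universal property of $A(K)$ to extend the inclusion $K\subseteq G_p^\wedge$ to a continuous homomorphism $\phi$ on $A(K)$ and then takes $\phi^\wedge\colon G\to C_p(K,\T)$ (implicitly invoking the identification $G\cong(G_p^\wedge)_p^\wedge$ for precompact $G$). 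You bypass $A(K)$ entirely and write down the two evaluation maps $\Phi(g)(k)=i(k)(g)$ and $E(k)(h)=h(k)$ directly; unwinding the definitions, these are exactly the maps the paper produces, so the content is the same. What the paper's route buys is consistency with the $A(K)$ machinery it has just set up (which it reuses, e.g., in Corollary~\ref{Cor_3.1} via the fact that $A(K)$ respects compactness); what your route buys is self-containedness and the observation that the argument needs nothing beyond the definitions of the pointwise topologies plus the compact-to-Hausdorff upgrade of a continuous injection to an embedding\,---\,in particular, precompactness of $G$ plays no essential role in your version.
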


\begin{proof}
\emph{Sufficiency}: Let $\varphi\colon G\longrightarrow H$ be a continuous homomorphism
onto a separating subgroup $H\subseteq C_p(K,\T)$. We have that $A(K)$ is algebraically
isomorphic to $C_p(K,\T)^\wedge$. Consider the continuous dual homomorphism
$$
\varphi^\wedge\colon (A(K),{t_p(C(K,\T)}))\longrightarrow G_p^\wedge.
$$
Since $\varphi(G)$ is separating, it follows that $\varphi^\wedge\res{K}$ is one-to-one.

\emph{Necessity}: Assume that $K\subseteq G_p^\wedge$. Then there is a continuous
homomorphism $\phi \colon (A(K), {t_p(C(K,\T))})\to G_p^\wedge$, which
is the identity map on $K$. Therefore, $\phi^\wedge(G)$ is a separating subgroup of
$C_p(K,\T)$, where $\phi^\wedge\colon G\to C_p(K,\T)$ is the dual homomorphism.
\end{proof}

\begin{coro}\label{Cor_3.1}
There are precompact abelian groups whose (precompact) dual groups contain infinite
compact subsets but no non-trivial convergent sequences.
\end{coro}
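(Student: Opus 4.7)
The plan is to apply the sufficiency direction of Proposition~\ref{Pr_3.1} with a carefully chosen compact space $K$. Let $K$ be an infinite compact Hausdorff space containing no non-trivial convergent sequence, for instance $K=\beta\omega$. Set $G:=C_p(K,\T)$. As a subgroup of the compact group $\T^K$, $G$ is precompact, and a standard Urysohn argument shows it is even dense in $\T^K$. The identity map $G\to G$ exhibits $G$ as a separating subgroup of $C_p(K,\T)$, so Proposition~\ref{Pr_3.1} combined with Proposition~\ref{Dual_C_p2} yields a topological isomorphism $G_p^\wedge\cong(A(K),\mathrm{Bohr})$ under which the subspace of free generators $K\subseteq A(K)$ embeds homeomorphically into $G_p^\wedge$. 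This places an infinite compact set inside $G_p^\wedge$, and $G_p^\wedge$ is precompact because Bohr topologies always are.

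To show that $G_p^\wedge$ contains no non-trivial convergent sequence, suppose for a contradiction that $(w_n)\to w_\infty$ is one. The set $S:=\{w_n:n\in\omega\}\cup\{w_\infty\}$ is Bohr-compact, so by the ``respects compactness'' property of $A(K)$ recalled just after Proposition~\ref{Dual_C_p2}, $S$ is compact in the original (free-abelian-group) topology of $A(K)$ as well. The free topology is finer than the Bohr topology and both are Hausdorff; hence on the compactum $S$ the two topologies coincide, and $w_n\to w_\infty$ in the free topology too. Since $K$ is compact, $A(K)=\bigcup_n A_n(K)$ with each $A_n(K)$ compact, so the compact set $S$ lies in some $A_n(K)$.

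The main obstacle is the final step: to conclude that a convergent sequence in $A_n(K)$ is eventually constant whenever $K$ has no non-trivial convergent sequence. This rests on Graev's description of $A_n(K)$ as a continuous image (indeed a perfect map with finite fibres) of the compact Hausdorff space $(K\sqcup\{0\}\sqcup(-K))^n$ via the natural word-reduction map (see~\cite{AT}). The strategy is to pass to a subsequence along which all $w_n$ share a common reduced pattern $w_n=\sum_{j=1}^\ell k_j x_{n,j}$ with fixed non-zero integer coefficients $k_j$, to lift the convergence $w_n\to w_\infty$ to convergence in the domain of the reduction map, and then to extract individual coordinate sequences $(x_{n,j})_n\subseteq K$ that converge in~$K$. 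By the hypothesis on~$K$ each such coordinate sequence is eventually constant, whence $(w_n)$ itself is eventually constant, contradicting non-triviality. This proves the corollary.
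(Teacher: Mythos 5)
Your choice of example and the first half of your argument coincide with the paper's: the paper also takes $G=C_p(\beta\omega,\T)$, invokes Proposition~\ref{Dual_C_p2} to identify $G_p^\wedge$ with $A(\beta\omega)$ in its Bohr topology (which contains the infinite compact set $\beta\omega$), and then uses the fact that $A(K)$ respects compactness to reduce the non-existence of convergent sequences in the Bohr topology to the same statement for the free topology of $A(\beta\omega)$. Up to that point your proposal is correct. The difference is in the last step: the paper simply cites the known theorem that $A(\beta\omega)$ contains no nontrivial convergent sequences (\cite[Theorem~3.4]{EOY} or \cite[Proposition~2.4]{Tk13}), whereas you attempt to prove it, and your sketch of that proof does not work as stated.

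Two concrete problems. First, the word-reduction map $i_n\colon (K\sqcup\{0\}\sqcup(-K))^n\to A_n(K)$ is perfect but is \emph{not} finite-to-one: the fibre over any word of reduced length strictly less than $n$ contains cancelling pairs, e.g.\ the fibre over $0$ in $A_2(K)$ contains all tuples $(x,-x)$ with $x\in K$, a copy of $K$. Second, and more seriously, the ``lifting'' step is exactly where the difficulty of the cited theorem lies. From $w_m\to w_\infty$ in $A_n(K)$ you cannot conclude (even after fixing a common coefficient pattern and passing to a subsequence) that the coordinate tuples $(x_{m,1},\dots,x_{m,\ell})$ converge, or even have a convergent subsequence, in $K^\ell$: for $K=\beta\omega$ a sequence has a convergent subsequence only if some subsequence is constant, so compactness of $K^\ell$ yields cluster points but never the convergent coordinate subsequences your argument needs. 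In other words, ``the coordinate sequences converge in $K$'' is essentially the conclusion you are trying to reach, not a step you may take. To close the gap you should either cite the theorem as the paper does, or replace the lifting by an argument tailored to $\beta\omega$ (e.g.\ exploiting that every infinite closed subset of $\beta\omega$ contains a copy of $\beta\omega$, together with a careful analysis of the supports of the $w_m$); as written, the final paragraph is a plan rather than a proof, and the plan would fail at the lifting step.
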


\begin{proof}
By Proposition~\ref{Dual_C_p2}, the dual group of $C_p(\beta \omega,\T)$ is
topologically isomorphic to $A(\beta\omega)$ equipped with the topology of pointwise
convergence on $C(\beta \omega,\T)$ and the latter group contains a copy of
$\beta\omega$. Since $A(\beta\omega)$ does not contain nontrivial
convergent sequences (see \cite[Theorem~3.4]{EOY} or \cite[Proposition~2.4]{Tk13}),
neither does $(A(\beta\omega), t_p(C(\beta \omega,\T))$. This completes the proof.
\end{proof}

\begin{remark}
An infinite compact space $K$ is \emph{Efimov} if $K$ does not contain either
nontrivial convergent sequences or $\beta\omega$. Proposition~\ref{Dual_C_p2}
implies that  the existence of precompact abelian groups whose dual groups
contain a compact Efimov subspace is consistent with $ZFC$ (see \cite{Fedo}).
\end{remark}

We can now characterize the groups whose dual groups do contain non-trivial convergent
sequences. We denote by $c_0(\T)$ the set of all sequences in $\T$ converging to $1$.
Each such a sequence is considered as an element of the compact group $\T^\N$.
Hence $c_0(\T)$ can be identified with a subgroup of $\T^\N$ which is equipped with
the subspace topology.

\begin{theorem}
Let $G$ be a precompact abelian group. Then the dual group $G_p^\wedge$ contains
non-trivial convergent sequences if and only if there is a continuous homomorphism of
$G$ onto a separating subgroup of $c_0(\T)$.
\end{theorem}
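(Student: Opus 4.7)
The plan is to view this theorem as the $K=\omega+1$ instance of Proposition~\ref{Pr_3.1}, translated via the natural identification
\[
c_0(\T) \;=\; \{f\in C_p(\omega+1,\T):f(\infty)=1\},
\]
under which the limit of a convergent sequence is encoded as the value at the point $\infty$. Rather than quoting Proposition~\ref{Pr_3.1} and then translating, it seems cleaner to rerun its construction directly inside $c_0(\T)$.

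For necessity, I would take a non-trivial convergent sequence $\chi_n\to\chi_\infty$ in $G_p^\wedge$. Multiplying through by $\chi_\infty^{-1}$, I may assume $\chi_\infty=e$, and passing to a subsequence I may additionally assume the $\chi_n$'s are pairwise distinct and all different from $e$ (a convergent sequence in a Hausdorff group attains each non-limit value only finitely often). Then define $\varphi\colon G\to\T^\N$ by $\varphi(g)=(\chi_n(g))_{n\in\omega}$. This is a continuous homomorphism, since its composition with each projection $\pi_n$ is the continuous character $\chi_n$, and $\varphi(g)$ lies in $c_0(\T)$ because $\chi_n\to e$ pointwise. The image $H=\varphi(G)\subseteq c_0(\T)$ separates points of $\omega+1$: distinct $\chi_n,\chi_m$ yield some $g$ with $h(n)\ne h(m)$ for $h=\varphi(g)$, while $\chi_n\ne e$ supplies some $g$ with $h(n)\ne 1 = h(\infty)$.

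For sufficiency, let $\varphi\colon G\to H$ be the given continuous surjection onto a separating $H\subseteq c_0(\T)$. Set $\chi_n:=\pi_n\circ\varphi\in G^\wedge$ for each $n\in\omega$. Membership of $\varphi(g)$ in $c_0(\T)$ says precisely that $\chi_n(g)\to 1$ for every $g\in G$, i.e., $\chi_n\to e$ in $G_p^\wedge$. The separating hypothesis, combined with surjectivity of $\varphi$, forces the $\chi_n$'s to be pairwise distinct (if $\chi_n=\chi_m$ then $h(n)=h(m)$ for every $h\in H$, contradicting separation of $n$ from $m$); discarding the at most one index $n$ with $\chi_n=e$ yields a non-trivial convergent sequence.

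The argument is essentially a dictionary translation along the identification above and poses no substantial technical obstacle. The one nuance to settle is making sure the notion of \emph{separating subgroup of $c_0(\T)$} used is strong enough, i.e.\ that it refers to separating points of the compactification $\omega+1$ rather than only of $\N$; the weaker reading would still give the same convergent sequence after discarding one trivial term, so the equivalence is robust either way.
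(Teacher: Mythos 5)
Your proof is correct and takes essentially the same approach as the paper: the paper simply observes that $C_p(K,\T)\cong c_0(\T)$ for $K$ a convergent sequence and invokes Proposition~\ref{Pr_3.1}, whereas you inline that proposition's proof concretely in the $c_0(\T)$ setting (with $\varphi$ and the $\pi_n\circ\varphi$ playing the roles of the dual homomorphisms). The nuance you flag about separating the limit point $\infty$ is resolved the way you suspect, since the paper's Definition~\ref{Def_3.1} separates all points of $K=\omega+1$, and your remark that the weaker reading still yields the conclusion is also correct.
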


\begin{proof}
Let $K:=\{1/n : n\in\N\}\cup\{0\}$ be a nontrivial convergent sequence. It is easy to see
that $C_p(K,\T)$ is topologically isomorphic to $c_0(\T)$ equipped with the topology
inherited from $\T^\N$. Then it suffices to apply
Proposition~\ref{Pr_3.1}.
\end{proof}

The group $c_0(\T)$ equipped with the sup-metric is complete and, hence, is a Polish
topological group. Therefore, there exist precompact abelian groups admitting a finer
non-discrete Polish topological group topology and whose dual groups contain non-trivial
convergent sequences.\smallskip

In the fall of 1990's several specialists in the Pontryagin duality theory were discussing
the problem whether every precompact reflexive abelian group was compact.
Counterexamples to this conjecture appeared in \cite{ACDT} and \cite{GM}, where the
authors established the existence of pseudocompact noncompact reflexive abelian groups.
Clearly every pseudocompact space has the Baire property. Afterward, a method of
constructing precompact non-pseudocompact reflexive abelian groups was presented
in \cite{BT}. All the groups in \cite{BT}  with this combination of properties had the Baire
property. These facts suggest the new conjecture that all precompact reflexive abelian
groups have the Baire property. In Theorem~\ref{Th:FC} we show that this conjecture is
false by presenting an example of a precompact reflexive abelian group which is of the
first category. Our arguments require two auxiliary results (for the first of them,
see \cite[Proposition~4.4]{HM}).

\begin{lemma}\label{Le:Pse}
If $G$ is an infinite pseudocompact abelian group, then all compact subsets of
the dual group $G_p^\wedge$ are finite.
\end{lemma}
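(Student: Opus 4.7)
\bigskip

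The plan is to argue by contradiction via a Baire category argument on $G$, exploiting the fact that pseudocompact spaces are Baire and that $V_1=\{z\in\T:|z-1|<1\}$ contains no non-trivial subgroup of $\T$.

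Assume $K\subseteq G_p^\wedge$ is infinite and compact. Extract from $K$ a non-trivial sequence $\{\chi_n\}$ converging to some $\chi_0\in K$ with $\chi_n\neq\chi_0$ for every $n$; replacing $\chi_n$ by $\chi_n\cdot\chi_0^{-1}$, we may assume $\chi_n\to e$ pointwise on $G$ while $\chi_n\neq e$. (Reducing from a compact infinite set to a bona fide convergent sequence is the delicate part: if $K$ is not first-countable at any of its points, one still needs to find a non-trivial convergent sequence in $G_p^\wedge$, which will be the main obstacle, discussed below.)

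The central Baire step goes as follows. Set $F_k=\{x\in G : |\chi_n(x)-1|\leq 1/2 \text{ for all } n\geq k\}$. Each $F_k$ is closed in $G$ and $\bigcup_k F_k=G$ by pointwise convergence. Since $G$ is pseudocompact (hence Baire), some $F_k$ has nonempty interior, so a basic neighborhood of the identity of the form $W=\{x:\psi_i(x)\in V_1 \text{ for } i=1,\ldots,m\}$ (with $\psi_i\in G^\wedge$) is contained in $x_0^{-1}F_k$ for some $x_0$. Taking $x=e$ yields $|\chi_n(x_0)-1|\leq 1/2$ for $n\geq k$, and then translation invariance gives $\chi_n(W)\subseteq V_1$ for $n\geq k$. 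Letting $N=\bigcap_{i=1}^m\ker\psi_i\subseteq W$, the set $\chi_n(N)$ is a subgroup of $\T$ contained in $V_1$, hence trivial; so $\chi_n\in N^\bot$ for all $n\geq k$.

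The next step is to iterate on the quotient $G/N$, which is pseudocompact and embeds into $\T^m$ via the $\psi_i$'s. Its closure $L$ in $\T^m$ is a compact Lie group, and $N^\bot\cong L^\wedge$ is finitely generated abelian. Applying the Baire argument a second time to $G/N$ (also pseudocompact, hence Baire), we find a closed subgroup $N'\supseteq N$ with $G/N'$ finite such that, for all sufficiently large $n$, $\chi_n\in (N')^\bot=(G/N')^\wedge$. Thus eventually the $\chi_n$'s lie in a finite subgroup of $G^\wedge$, and since $\chi_n\to e$ pointwise on $G$, pointwise convergence inside a finite group of characters forces $\chi_n=e$ eventually, contradicting $\chi_n\neq e$.

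The main obstacle is the very first step: producing a non-trivial convergent sequence from compactness alone. The Baire-category machinery handles sequences cleanly but not arbitrary nets, because it requires the countable union $\bigcup_k F_k=G$. One possible workaround is to show that the hypothesis "$K$ is infinite and compact" yields either a non-trivial convergent sequence directly (via first-countability of the subspace topology on $K$, which holds for instance when $G$ is separable) or a countably infinite relatively compact subset whose closure contains a convergent sequence by an accumulation-point/transfinite argument; alternatively, one may apply the same Baire argument on the quotients $G/N$ where $N$ is the annihilator of a countable subset of $K$, so that the relevant topology becomes second countable. Once a convergent sequence is obtained, the rest of the proof is a straightforward two-step Baire category reduction.
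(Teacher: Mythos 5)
The paper does not actually prove this lemma; it quotes it from \cite[Proposition~4.4]{HM}. Your reconstruction contains a genuine gap exactly where you suspect one: the reduction from ``infinite compact subset'' to ``nontrivial convergent sequence''. Your Baire-category core (the closed sets $F_k$ covering $G$, the basic neighbourhood $W$, the subgroup $N=\bigcap_i\ker\psi_i\subseteq W$ with $\chi_n(N)$ a subgroup of $\T$ inside $V_1$, hence trivial) is correct and, suitably completed, shows that $G_p^\wedge$ has no nontrivial convergent sequences whenever $G$ is Baire --- but that is the weaker Fleischer--Traynor result the paper already cites from \cite{FT}. It cannot be upgraded to the compact-subsets statement by Baire category alone: the paper poses precisely this upgrade as Problem~\ref{Prob:1}, open for Baire groups, and Corollary~\ref{Cor_3.1} exhibits precompact groups whose duals contain a copy of $\beta\omega$ yet no nontrivial convergent sequences, so an infinite compact subset of a dual group need not yield a convergent sequence. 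None of your three suggested workarounds closes this as stated: $G$ need not be separable; the closure of a countable subset of a compact space need not contain a convergent sequence (again $\omega\subseteq\beta\omega$); and the quotient idea is the right one but is left unexecuted.

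To make the quotient workaround work you must use pseudocompactness through the Comfort--Ross characterization ($G$ is $G_\delta$-dense in its compact completion $bG$), not merely through the Baire property. Sketch: let $D$ be a countably infinite subset of the compact set $K$ and let $M$ be the annihilator of $D$ computed in $bG$. Since $D$ is countable, $M$ is a closed $G_\delta$-subgroup of $bG$, so $N=G\cap M$ is $G_\delta$-dense, hence dense, in $M$; duality in the compact group $bG$ then gives $N^\bot=M^\bot=\langle D\rangle$, a \emph{countable} subgroup of $G^\wedge$ that is closed in $G_p^\wedge$. Consequently $\overline{D}\subseteq K\cap N^\bot$ is a countably infinite compact Hausdorff space, which does contain a nontrivial convergent sequence, and your Baire argument takes over from there. (Your second stage also needs polishing: from $\chi_n\in N^\bot\cong L^\wedge$ with $L$ a compact Lie group one should argue via equicontinuity, or via the bound on the integer vectors $\chi_n\res{\T^j}$, that only finitely many characters can occur, rather than asserting directly that $G/N'$ is finite.) Without some step exploiting $G_\delta$-density, the proposal only establishes the convergent-sequence version of the lemma, which is not the strength that Theorem~\ref{Th:FC} requires.
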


\begin{lemma}\label{Le:Quo}
Let $H$ be a closed pseudocompact subgroup of a topological abelian group $G$.
If the dual group $(G/H)_p^\wedge$ does not contain infinite compact subsets (nontrivial
convergent sequences), then neither does $G_p^\wedge$.
\end{lemma}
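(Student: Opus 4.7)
The plan is to identify $(G/H)_p^\wedge$ with the annihilator $H^\bot$ of $H$ inside $G_p^\wedge$, and then use the continuous restriction homomorphism $r\colon G_p^\wedge\to H_p^\wedge$, $r(\chi)=\chi|_H$, to push compact sets and convergent sequences from $G_p^\wedge$ down to $H_p^\wedge$, where Lemma~\ref{Le:Pse} forces collapse. For the identification, the quotient homomorphism $\pi\colon G\to G/H$ induces a continuous injective dual homomorphism $\pi^\wedge\colon (G/H)_p^\wedge\to G_p^\wedge$ whose image is precisely $H^\bot$, and $\pi^\wedge$ is a topological embedding: a continuous character of $G$ factors through $\pi$ exactly when it annihilates $H$, and basic identity neighborhoods of the form $O(\pi(x_1),\ldots,\pi(x_n))$ correspond under $\pi^\wedge$ to $O(x_1,\ldots,x_n)\cap H^\bot$. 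The map $r$ is easily seen to be continuous and its kernel equals $H^\bot$, so its fibers are cosets of $H^\bot$.

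For the compact case, suppose $K\subseteq G_p^\wedge$ is infinite and compact. Then $r(K)$ is compact in $H_p^\wedge$, and Lemma~\ref{Le:Pse} applied to the pseudocompact group $H$ gives that $r(K)$ is finite. Decomposing $K$ into the finitely many closed (hence compact) pieces $K\cap r^{-1}(y)$, $y\in r(K)$, at least one of them, say $K\cap \chi_0\cdot H^\bot$ for some $\chi_0\in K$, must be infinite. Left-translating by $\chi_0^{-1}$ yields an infinite compact subset of $H^\bot$, which under the inverse of $\pi^\wedge$ becomes an infinite compact subset of $(G/H)_p^\wedge$, contradicting the hypothesis. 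For the convergent-sequence variant, let $\{\chi_n\}\subseteq G_p^\wedge$ be nontrivial and convergent to the identity, with pairwise distinct terms. Then $r(\chi_n)\to e$ in $H_p^\wedge$; since Lemma~\ref{Le:Pse} also rules out nontrivial convergent sequences in $H_p^\wedge$ (such a sequence together with its limit would be an infinite compact set), we must have $r(\chi_n)=e$ for all large $n$, so $\chi_n\in H^\bot$ eventually. Passing to this tail produces a nontrivial convergent sequence in $H^\bot\cong(G/H)_p^\wedge$, again a contradiction.

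The only nontrivial ingredient is Lemma~\ref{Le:Pse}; the rest is essentially bookkeeping. The main place where some care is required is verifying that $\pi^\wedge$ is a \emph{topological} embedding rather than merely a continuous injection, so that compactness and convergence can be transferred faithfully between $(G/H)_p^\wedge$ and its image $H^\bot$ in $G_p^\wedge$; this is, however, standard from the duality facts recalled in the Notation and terminology subsection.
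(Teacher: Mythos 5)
Your proposal is correct and follows essentially the same route as the paper: restrict along $r\colon G_p^\wedge\to H_p^\wedge$, use Lemma~\ref{Le:Pse} on the pseudocompact group $H$ to make $r(K)$ finite, isolate an infinite fiber, translate it into $H^\bot=\pi^\wedge\big((G/H)_p^\wedge\big)$, and pull back through the topological monomorphism $\pi^\wedge$. The paper simply notes that the convergent-sequence case is ``almost the same,'' whereas you spell out the (correct) tail argument explicitly; otherwise the two proofs coincide.
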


\begin{proof}
Assume that all compact subsets of $(G/H)_p^\wedge$ are finite. Let $\pi\colon G\to G/H$
be the quotient homomorphism and $\pi^\wedge\colon (G/H)_p^\wedge\to G_p^\wedge$
be the dual homomorphism. Let also $r\colon G_p^\wedge\to H_p^\wedge$ be the restriction
mapping, $r(\chi)=\chi\res{H}$ for each $\chi\in G_p^\wedge$. Clearly $r$ is a continuous
homomorphism.

Assume for a contradiction that $K$ is an infinite compact subset of $G_p^\wedge$.
Then $r(K)$ is a compact subset of $H_p^\wedge$. Since $H$ is pseudocompact,
$r(K)$ is finite by Lemma~\ref{Le:Pse}. Hence the compact set $L=K\cap r^{-1}(\mu)\sub
G_p^\wedge$ is infinite for some $\mu\in r(K)$. Take an arbitrary element $\chi_0\in L$
and put $L^\ast= L\cdot\chi_0^{-1}$. Then $L^\ast$ is an infinite compact subset of
$G_p^\wedge$ and the restriction to $H$ of every element $\nu\in L^\ast$ is the
identity of $H_p^\wedge$. Hence for every $\nu\in L^\ast$ there exists a character
$\psi\in (G/H)_p^\wedge$ satisfying $\nu=\psi\circ\pi=\pi^\wedge(\psi)$. We
conclude, therefore, that $L^\ast\sub \pi^\wedge\big((G/H)_p^\wedge\big)$.
Since $\pi^\wedge$ is a topological monomorphism, the latter inclusion implies
that $(G/H)_p^\wedge$ contains an infinite compact subset. This contradicts
the lemma assumptions. We have thus proved that all compact subsets
of $G_p^\wedge$ are finite.

The argument in the case of convergent sequences is almost the same.
\end{proof}

A subgroup $H$ of a topological abelian group $G$ is said to be \emph{$h$-embedded}
in $G$ if \emph{every} homomorphism $h\colon H\to\T$ extends to a \emph{continuous}
homomorphism of $G$ to $\T$ (see  \cite{Tk88} or \cite[p.~290]{ACDT}). It is clear from
the definition that every homomorphism $h\colon H\to\T$ of an $h$-embedded subgroup
$H$ of $G$ is continuous.

Let $G$ be a topological abelian group. We recall that the dual group of $G$ endowed
with the compact-open topology is denoted by $G^\wedge$. If all compact subsets of
$G$ are finite, then $G^\wedge=G_p^\wedge$.

\begin{theorem}\label{Th:FC}
There exists a first category, precompact, reflexive abelian group.
\end{theorem}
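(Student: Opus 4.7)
The plan is to adapt the construction from \cite{BT} of precompact reflexive non-pseudocompact abelian groups, replacing the countable $h$-embedded subgroup used there by an uncountable first category subgroup built via Theorem~\ref{Pro:Mea}. I would start with a pseudocompact non-compact reflexive abelian group $H$ (from \cite{ACDT} or \cite{GM}), view it as a dense subgroup of its compact completion $\widetilde{H}$, and work inside the compact ambient group $K=\widetilde{H}\times\Z(2)^\omega$. The target group $G$ should be a dense subgroup of $K$ containing $H\times\{0\}$ as a closed $h$-embedded pseudocompact subgroup, with the quotient $G/(H\times\{0\})$ topologically isomorphic to the first category group $L$ of Theorem~\ref{Pro:Mea}. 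The naive choice $G=H\times L$ will not work since $L$ itself is not reflexive, so the extension must be set up with care.

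Precompactness of $G$ is immediate as $G$ is a subgroup of the compact group $K$. The first Baire category property follows from that of $L$: with $\pi\colon G\to G/H\cong L$ open and continuous, the preimages of a first category cover of $L$ (coming from the closed nowhere dense sets $F_{m,N}$ used in the proof of Theorem~\ref{Pro:Mea}) form a first category cover of $G$, since the preimage of a nowhere dense set under an open continuous surjection is nowhere dense.

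For the dual, by Lemma~\ref{Le:Pse} every compact subset of $H_p^\wedge$ is finite, and by Theorem~\ref{Pro:Mea} the same holds for $L_p^\wedge$. Applying Lemma~\ref{Le:Quo} to the closed pseudocompact subgroup $H$ of $G$ with quotient $L$, one concludes that every compact subset of $G_p^\wedge$ is finite as well, and in particular $G_p^\wedge$ contains no nontrivial convergent sequences.

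The main obstacle is reflexivity itself. The lemmas above control only the pointwise topology on $G^\wedge$, whereas reflexivity is defined via the compact-open topology and its bidual. Since $L$ is not reflexive, a product argument is not available; instead one must leverage the $h$-embedding of $H$ in $G$ and the reflexivity of $H$ to show that the canonical evaluation $G\to(G^\wedge)^\wedge$ is a topological isomorphism, ensuring that the bidual does not swell beyond $G$ up to the completion of $G$. This step would follow the scheme of \cite{BT}, and the delicate point I expect to be the heart of the proof is verifying that substituting the countable $h$-embedded subgroup used there by the uncountable first category $L$ preserves the reflexivity while switching the Baire status from second to first category.
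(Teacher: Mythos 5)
Your construction is essentially the paper's: an extension of a pseudocompact group by the first category group of Theorem~\ref{Pro:Mea}, with precompactness and the first category property obtained exactly as you describe, and Lemmas~\ref{Le:Pse} and~\ref{Le:Quo} used to kill infinite compact subsets of the pointwise dual. But the step you defer as ``the main obstacle'' is precisely where your plan has a genuine gap, and the mechanism you gesture at is not the one that works. The paper does \emph{not} use reflexivity of any building block (the ambient pseudocompact group need not be reflexive), and it does not need the pseudocompact subgroup to be $h$-embedded. What it needs, and what you are missing, is that \emph{all compact subsets of the group $G$ itself are finite}. This is obtained by taking the ambient group from \cite[Theorem~4.2]{BT} so that all \emph{countable} subgroups are $h$-embedded, and then applying \cite[Proposition~2.1]{ACDT}; requiring the uncountable pseudocompact subgroup itself to be $h$-embedded, as you propose, is both unnecessary and problematic (an infinite pseudocompact group always admits discontinuous characters).

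Once you know that compact subsets of $G$ are finite \emph{and} (via Lemma~\ref{Le:Quo}) that compact subsets of $G_p^\wedge$ are finite, reflexivity is immediate and requires no further work: the first fact gives $G^\wedge=G_p^\wedge$ as topological groups, the second gives $(G_p^\wedge)^\wedge=(G_p^\wedge)_p^\wedge$, so $(G^\wedge)^\wedge=(G_p^\wedge)_p^\wedge$, and the theorem of Raczkowski and Trigos-Arrieta \cite{RT} states that for any precompact abelian group the evaluation map into the pointwise bidual is a topological isomorphism. You already have all the dual-side ingredients in hand; the missing idea is to secure finiteness of compact subsets of $G$ itself and then reduce the compact-open biduality to the pointwise biduality, rather than trying to propagate reflexivity through the extension following the scheme of \cite{BT}.
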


\begin{proof}
Since the compact group $\Z(2)^\omega$ is second countable, one can apply
\cite[Theorem~4.2]{BT} to find a pseudocompact abelian group $S$ and a closed
pseudocompact subgroup $P$ of $S$ such that the quotient group $S/P$ is
topologically isomorphic to $\Z(2)^\omega$ and all countable subgroups of
$S$ are $h$-embedded in $S$. In what follows we identify the groups $S/P$
and $\Z(2)^\omega$. Let $G$ be the subgroup of $\Z(2)^\omega$ considered in
Example~\ref{Pro:Mea}. Denote by $\pi$ the quotient homomorphism of $S$ onto
$S/P$. Clearly, $H=\pi^{-1}(G)$ is a precompact subgroup of $S$, $H/P\cong G$,
and $\varphi=\pi\res{H}$ is an open continuous homomorphism of $H$ onto $G$.
As $G$ is of the first category, so is $H$. It remains to verify that the group
$H$ is reflexive.

First, since $H$ is a subgroup of $S$, all countable subgroups of $H$ are
$h$-embedded. Applying \cite[Proposition~2.1]{ACDT} we see that all compact
subsets of $H$ are finite. In particular, $H^\wedge=H_p^\wedge$. Also, it follows
from Theorem~\ref{Pro:Mea} that all compact subsets of $(H/P)_p^\wedge\cong
G_p^\wedge$ are finite. Since $P$ is pseudocompact, all compact subsets of
$P_p^\wedge$ are finite by Lemma~\ref{Le:Pse}. Therefore, Lemma~\ref{Le:Quo}
implies that all compact subsets of $H_p^\wedge$ are finite as well. We conclude,
therefore, that
$$
(H^\wedge)^\wedge=(H_p^\wedge)^\wedge=(H_p^\wedge)_p^\wedge.
$$
Also, the canonical evaluation mapping of $H$ to $(H_p^\wedge)_p^\wedge$
is a topological isomorphism of $H$ onto $(H_p^\wedge)_p^\wedge$ \cite{RT}.
Hence the group $H$ is reflexive.
\end{proof}

We finish with the following problem mentioned in the introduction:

\begin{problem}\label{Prob:1}
Let $G$ be a precompact topological abelian group with the Baire property.
Is it true that all compact subsets of $G_p^\wedge$ are finite?
\end{problem}


M.V.~Ferrer\\
{\small\em Dept. de Matem\'{a}ticas}, {\small\em Universitat Jaume I},
{\small\em Campus Riu Sec s/n, 12071 Castell\'on, Spain.}\\
{\small\em e-mail: mferrer@mat.uji.es}
\medskip

\noindent S.~Hern\'andez\\
{\small\em Dept. de Matem\'{a}ticas}, {\small\em Universitat Jaume I},
{\small\em Campus Riu Sec s/n, 12071 Castell\'on, Spain.}\\
{\small\em e-mail: hernande@mat.uji.es}
\medskip

\noindent M.~Tkachenko\\
{\small\em Dept.~de~Matem\'aticas}, {\small\em Universidad Aut\'onoma Metropolitana},
{\small\em Av. San Rafael Atlixco $\#\,186$,
Col.~Vicentina, Iztapalapa, C.P.~09340, Ciudad de M\'exico, Mexico.}\\
{\small\em e-mail: mich@xanum.uam.mx}\\

\end{document}